\theoremstyle{plain}
\newtheorem{theorem}{Theorem}
\newtheorem{lemma}[theorem]{Lemma}
\newtheorem{proposition}[theorem]{Proposition}
\theoremstyle{definition}
\theoremstyle{remark}
\newtheorem{remark}[theorem]{Remark}
\begin{document}

\begin{frontmatter}



\title{On the structure of spikes}


\author{Vahid Ghorbani, Ghodratollah Azadi and Habib Azanchiler}

\address{Department of mathematics, University of Urmia, Iran}

\begin{abstract}
Spikes are an important class of 3-connected matroids. For an integer $r\geq 3$, there is a unique binary r-spike denoted by $Z_{r}$. When a circuit-hyperplane of $Z_{r}$ is relaxed, we obtain another spike and repeating this procedure will produce other non-binary spikes. The $es$-splitting operation on a binary spike of rank $r$, may not yield a spike. In this paper, we give a necessary and sufficient condition for the $es$-splitting operation to construct $Z_{r+1}$ directly from $Z_{r}$. Indeed, all binary spikes and many of non-binary spikes of each rank can be derived from the spike $Z_{3}$ by a sequence of The $es$-splitting operations and circuit-hyperplane relaxations.
\end{abstract}
\begin{keyword}
binary matroid \sep $es$-splitting operation \sep relaxation \sep spike.
\end{keyword}
\end{frontmatter}

\section{Introduction}
Azanchiler \cite{1}, \cite{2} extended the notion of $n$-line splitting operation from graphs to binary matroids. He characterized the $n$-line splitting operation of graphs in terms of cycles of the respective graph and then extended this operation to binary matroids as follows. Let $M$ be a binary matroid on a set $E$ and let $X$ be a subset of $E$ with $e\in X$. Suppose $A$ is a matrix that represents $M$ over $GF(2)$. Let $A^e_{X}$ be a matrix obtained from $A$ by adjoining an extra row to $A$ with this row being zero everywhere except in the columns corresponding to the elements of $X$ where it takes the value 1, and then adjoining two columns labeled $\alpha$ and $\gamma$ to the resulting matrix such that the column labeled $\alpha$ is zero everywhere except in the last row where it takes the value 1, and $\gamma$ is the sum of the two column vectors corresponding to the elements $\alpha$ and $e$. The vector matroid of the matrix $A^e_{X}$ is denoted by $M^e_{X}$. The transition from $M$ to $M^e_{X}$ is called an \emph{$es$-splitting operation}. We call the matroid $M^e_{X}$ as \emph{$es$-splitting matroid}.

Let $M$ be a matroid and $X\subseteq E(M)$, a circuit $C$ of $M$ is called an \emph{$OX$-circuit} if C contains an odd number of elements of $X$, and $C$ is an \emph{$EX$-circuit} if C contains an even number of elements of $X$. 
The following proposition characterizes the circuits of the matroid $M^e_{X}$ in terms of the circuits of the matroid $M$.
\begin{proposition} \textnormal{\cite{1}} Let $M=(E,\mathcal C)$ be a binary matroid together with the collection of circuits $\mathcal C$. Suppose $X\subseteq E$, $e\in X$ and $\alpha,\gamma\notin E$. Then $M^e_{X}=(E\cup\{\alpha,\gamma\},\mathcal C')$ where $\mathcal C'=(\cup_{i=0}^{5}\mathcal{C}_{i})\cup \Lambda$ with $\Lambda=\{e,\alpha,\gamma\}$ and\\
$\mathcal C_{0}=\{C\in \mathcal C: \text{$C$ is an $EX$-circuit}\}$;\\
$\mathcal C_{1}=\{C\cup \{\alpha\}: \text{$C\in \mathcal C$ and $C$ is an $OX$-circuit}\}$;\\
$\mathcal C_{2}=\{C\cup \{e,\gamma\}: \text{$C\in \mathcal C$, $e\notin C$ and $C$ is an $OX$-circuit}\}$;\\
$\mathcal C_{3}=\{(C\setminus e)\cup \{\gamma\}: \text{$C\in \mathcal C$, $e\in C$ and $C$ is an $OX$-circuit}\}$;\\
$\mathcal C_{4}=\{(C\setminus e)\cup \{\alpha,\gamma\}: \text{$C\in \mathcal C$, $e\in C$ and $C$ is an $EX$-circuit}\}$;\\
$\mathcal C_{5}=$ The set of minimal members of $\{C_{1}\cup C_{2}: C_{1},C_{2}\in \mathcal C,C_{1}\cap C_{2}=\emptyset$\\ \text{and each of $C_{1}$ and $C_{2}$ is an $OX$-circuit}$\}$.
\label{pro1}
\end{proposition}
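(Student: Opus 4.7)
My plan is to analyze linear dependence of columns of the representation matrix $A^e_X$ directly. Each column has an ``old'' part (the corresponding column $\mathbf{a}_f$ of $A$, with $\alpha$ contributing $\mathbf{0}$ and $\gamma$ contributing $\mathbf{a}_e$) and a ``new'' entry (equal to $1$ precisely for $f\in X$ and for $f=\alpha$). Hence $S\subseteq E\cup\{\alpha,\gamma\}$ is dependent in $M^e_X$ iff
\begin{enumerate}[label=(\roman*)]
\item $\sum_{f\in S\cap E}\mathbf{a}_f + [\gamma\in S]\,\mathbf{a}_e = \mathbf{0}$ over $GF(2)$, and
\item $|S\cap X| + [\alpha\in S]$ is even,
\end{enumerate}
where $[\,\cdot\,]$ denotes the Iverson bracket. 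Equivalently, (i) says that $S\cap E$ (when $\gamma\notin S$) or $(S\cap E)\triangle\{e\}$ (when $\gamma\in S$) is a cycle of $M$, i.e.\ a disjoint union of circuits.

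Next I would split on the four possibilities for $\Delta:=S\cap\{\alpha,\gamma\}$ and, inside the two $\gamma$-cases, on whether $e\in S$. In each of the five ``small'' branches, (i) and (ii) together require a cycle of $M$ of a specific OX/EX parity, and imposing minimality of $S$ in $M^e_X$ collapses that cycle to a single circuit of $M$ of the prescribed parity. This produces $\mathcal{C}_1$ from $\Delta=\{\alpha\}$; $\mathcal{C}_2$ and $\mathcal{C}_3$ from $\Delta=\{\gamma\}$ (with $e\notin S$ and $e\in S$ respectively); and $\mathcal{C}_4$ from $\Delta=\{\alpha,\gamma\}$ with $e\notin S$. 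The leftover branch $\Delta=\{\alpha,\gamma\}$ with $e\in S$ yields $\Lambda=\{e,\alpha,\gamma\}$: a direct check gives column sum $\mathbf{a}_e+\mathbf{0}+\mathbf{a}_e=\mathbf{0}$ on top and $1+1+0=0$ on the new row, while every $2$-subset of $\Lambda$ violates (i) or (ii); any larger $S$ in this branch strictly contains $\Lambda$, so $\Lambda$ is the unique circuit here.

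The residual branch $\Delta=\emptyset$ requires the most care. Here $S\subseteq E$ is a cycle of $M$ with $|S\cap X|$ even, so, using that $M$ is binary, $S=C_1\sqcup\cdots\sqcup C_k$ with each $C_i$ a circuit of $M$. Minimality leaves only two possibilities: $k=1$ with $C_1$ an EX-circuit (giving $\mathcal{C}_0$); or $k=2$ with both $C_1,C_2$ being OX-circuits (giving $\mathcal{C}_5$). Indeed, a single OX-circuit fails (ii); one EX plus one OX fails (ii); and as soon as $k\ge 3$, either some $C_i$ is EX (and is already a proper dependent subset of $S$ in $M^e_X$) or all $C_i$ are OX (and then any two of them already form a proper dependent subset of the $\mathcal{C}_5$ type). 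The converse direction -- each listed set is actually a circuit -- is routine after this, since (i) and (ii) hold by construction and no proper subset can fall into any of the cases already treated. I expect the main obstacle to lie in this final branch, where one leans on the standard binary-matroid fact that symmetric differences of circuits decompose into disjoint unions of circuits in order to rule out $k\ge 3$ cleanly.
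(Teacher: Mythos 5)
The paper itself gives no proof of this proposition (it is quoted from \cite{1}), so your proposal has to stand on its own. Your setup is the right one and is correct as far as it goes: a set $S\subseteq E\cup\{\alpha,\gamma\}$ has columns of $A^e_X$ summing to zero precisely when your (i) and (ii) hold. (Strictly, this characterizes the cycles of $M^e_X$, not the dependent sets; the circuits are the minimal nonempty such $S$, which is what you actually use.) The branches $\Delta=\{\alpha\}$, $\Delta=\{\alpha,\gamma\}$ and $\Delta=\emptyset$ can indeed be pushed through along the lines you sketch, and your treatment of $\Lambda$ is fine. A small slip: in the branch $\Delta=\{\gamma\}$ it is the sub-case $e\in S$ that yields $\mathcal{C}_2$ and the sub-case $e\notin S$ that yields $\mathcal{C}_3$, not the other way around, since members of $\mathcal{C}_2$ contain $e$ and members of $\mathcal{C}_3$ do not.

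The genuine gap is the sentence asserting that in each small branch ``imposing minimality of $S$ collapses that cycle to a single circuit of $M$ of the prescribed parity.'' That is precisely the point requiring the same decomposition analysis you carry out only for $\Delta=\emptyset$, and in the branch $\Delta=\{\gamma\}$, $e\notin S$ it fails. There (i) says $(S\cap E)\cup\{e\}$ is a cycle of $M$, and this cycle may decompose as $C_1\sqcup C_2$ with $e\in C_1$, $C_1$ an $EX$-circuit and $C_2$ an $OX$-circuit; the set $S=(C_1\setminus e)\cup C_2\cup\{\gamma\}$ then satisfies (i) and (ii), and no proper subset of it need be a cycle of $M^e_X$. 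Concretely, let $M=U_{2,3}\oplus U_{2,3}$ on $\{e,a,b\}\sqcup\{c,d,f\}$ with the standard binary representation and $X=\{e,a,c\}$: a direct check of the columns of $A^e_X$ shows that $\{a,b,c,d,f,\gamma\}$ is a circuit of $M^e_X$ (no proper nonempty subset of its columns sums to zero), yet it lies in none of $\mathcal{C}_0,\dots,\mathcal{C}_5,\Lambda$. The same configuration occurs inside $3$-connected binary matroids (take the two disjoint triangles of the prism graph), so connectivity does not rescue the argument. Hence the collapse you invoke is not automatic, and your outline cannot be completed into a proof of the statement exactly as displayed: any honest argument must confront this additional family of minimal zero-sum sets in the $\gamma$-branch, which means carrying out the circuit decomposition explicitly in every branch rather than only in the $\Delta=\emptyset$ one.
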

It is observed that the $es$-splitting of a 3-connected binary matroid may not yield a 3-connected binary matroid. The following result, provide a sufficient condition under which the $es$-splitting operation on a 3-connected binary matroid yields a 3-connected binary matroid.

\begin{proposition}\textnormal{\cite{4}}
Let $M$ be a 3-connected binary matroid, $X\subseteq E(M)$ and $e\in X$. Suppose that $M$ has an $OX$-circuit not containing $e$. Then $M^e_{X}$ is a 3-connected binary matroid.
\label{pro2}
\end{proposition}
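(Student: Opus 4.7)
The plan is to assume, for contradiction, that $M^e_X$ admits a $k$-separation with $k\le 2$ and to use the circuit list of Proposition~\ref{pro1} together with the hypothesised $OX$-circuit $C$ (with $e\notin C$) to manufacture a $(\le 2)$-separation of $M$, contradicting 3-connectedness of $M$.

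I would first dispatch the degenerate local obstructions to 3-connectedness. Since $\Lambda=\{e,\alpha,\gamma\}$ is a triangle of $M^e_X$, neither $\alpha$ nor $\gamma$ is a loop, and $\{\alpha,\gamma\}$ is not a parallel pair. To see that neither new element is a coloop, invoke the hypothesis: the $OX$-circuit $C$ with $e\notin C$ yields, by Proposition~\ref{pro1}, the circuit $C\cup\{\alpha\}\in\mathcal C_1$ through $\alpha$ and the circuit $C\cup\{e,\gamma\}\in\mathcal C_2$ through $\gamma$, so both elements lie in circuits. A symmetric check on the circuits in $\mathcal C_i$ shows that no element of $E(M)$ becomes a coloop either.

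Now suppose $(A,B)$ is a $k$-separation of $M^e_X$ with $k\le 2$ and $|A|,|B|\ge 2$. Because $\Lambda$ is a triangle, if it meets both sides then the lone element of $\Lambda$ in the smaller side can be pulled across without increasing the connectivity, using that it lies in the closure of the other two triangle elements; the boundary cases where this pull makes one side too small are precisely the degenerate obstructions handled above. Hence I may assume $\Lambda\subseteq A$. Setting $A'=A\setminus\{\alpha,\gamma\}$ and $B'=B$, I obtain a partition of $E(M)$ with $e\in A'$.

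The crux is then the rank comparison between $M^e_X$ and $M$. Since $\alpha,\gamma\in\mathrm{cl}_{M^e_X}(\{e\})\subseteq\mathrm{cl}_{M^e_X}(A')$, we have $r_{M^e_X}(A)=r_{M^e_X}(A')$, and because the splitting adds one independent row, $r(M^e_X)=r(M)+1$. The key inequality to establish is $r_{M^e_X}(A')+r_{M^e_X}(B')\le r_M(A')+r_M(B')+1$, which reflects the fact that the extra row of $A^e_X$ contributes at most $1$ to the total rank when split across $A'$ and $B'$. Combining these ingredients converts $r_{M^e_X}(A)+r_{M^e_X}(B)-r(M^e_X)\le 1$ into $r_M(A')+r_M(B')-r(M)\le 1$, producing a $(\le 2)$-separation of $M$ with $|A'|,|B'|\ge 2$, which contradicts the 3-connectedness of $M$. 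The main obstacle will be verifying the rank identity cleanly; this is where the $OX$-circuit hypothesis earns its keep, since it is exactly what prevents the pathological configuration in which the extra row duplicates a row-space dependence already present on one of the two sides. Tying this linear-algebraic observation back to the circuit bookkeeping of Proposition~\ref{pro1} is the delicate step.
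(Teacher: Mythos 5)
The paper does not actually prove this proposition --- it is quoted without proof from Dhotre--Malavadkar--Shikare [4] --- so there is no in-paper argument to measure you against; I can only assess your sketch on its merits. Your architecture (push a hypothetical $(\le 2)$-separation $(A,B)$ of $M^e_X$ down to one of $M$, after using the triangle $\Lambda=\{e,\alpha,\gamma\}$ to normalise $\Lambda\subseteq A$) is the natural and workable one, but the rank bookkeeping at the crux is wrong in two concrete ways. First, $\alpha,\gamma\notin\mathrm{cl}_{M^e_X}(\{e\})$: the column of $\alpha$ is the new unit vector while the column of $e$ projects nontrivially onto the old coordinates, so $\{e,\alpha\}$ is independent and $r_{M^e_X}(\Lambda)=2$; each element of a triangle lies only in the closure of the other \emph{two}. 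Consequently $r_{M^e_X}(A)=r_{M^e_X}(A')$ is false; the correct identity, for $e\in A'$ and $\alpha,\gamma\in A$, is $r_{M^e_X}(A)=r_M(A')+1$. Second, your ``key inequality'' $r_{M^e_X}(A')+r_{M^e_X}(B')\le r_M(A')+r_M(B')+1$ points the wrong way: to convert $\lambda_{M^e_X}(A)\le 1$ into $\lambda_M(A')\le 1$ you need the \emph{lower} bound $r_{M^e_X}(A)+r_{M^e_X}(B)\ge r_M(A')+r_M(B')+1$, which does hold via $r_{M^e_X}(A)=r_M(A')+1$ and $r_{M^e_X}(B)\ge r_M(B)$. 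The upper bound you state is in fact false whenever both $A'$ and $B'$ contain $OX$-circuits, and even if granted, your chain only yields $\lambda_M(A')\le 2$, which is no contradiction with 3-connectedness.

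The more serious gap is that you never treat the boundary case that is the whole point of the hypothesis. The $OX$-circuit avoiding $e$ is not needed to rule out coloops (membership in the circuit $\Lambda$ already does that for $\alpha$ and $\gamma$); it is needed exactly when $A=\Lambda$, i.e.\ $|A'|=1$, where your reduction to $M$ cannot even be stated. There one computes $r_{M^e_X}(E(M)\setminus\{e\})=r_M(E(M)\setminus\{e\})+1=r(M)+1$ \emph{because} $E(M)\setminus\{e\}$ contains an $OX$-circuit, whence $\lambda_{M^e_X}(\Lambda)=2$ and $(\Lambda,B)$ is not a 2-separation; absent the hypothesis this case genuinely fails, which is why it appears in the statement. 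The case $|A|=2$ (e.g.\ $A=\{\alpha,\gamma\}$) should be dismissed by the same computation. So: right strategy, but the decisive rank identities are incorrect and the one step where the hypothesis is indispensable is left as a vague remark rather than proved.
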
  
To define rank-r spikes, let $E=\{x_{1},x_{2},...,x_{r},y_{1},y_{2}...,y_{r},t\}$ for some $r\geq 3$. Let $\mathcal C_{1}=\{\{t,x_i,y_{i}\}:1\leq i\leq r\}$ and $\mathcal C_{2}=\{\{x_{i},y_{i},x_{j},y_{j}: 1\leq i < j \leq r \}$. The set of circuits of every spike on $E$ includes $\mathcal C_{1}\cup \mathcal C_{2}$. Let $C_{3}$ be a, possibly empty, subsets of $\{\{z_{1},z_{2},...z_{r}\}: \text{$z_{i}$ is in $\{x_{i},y_{i}\}$ for all $i$}\}$ such that no two members of $\mathcal C_{3}$ have more than $r-2$ common elements. Finally, let $\mathcal C_{4}$ be the collection of all $(r+1)$-element subsets of $E$ that contain no member of $\mathcal C_{1}\cup \mathcal C_{2}\cup \mathcal C_{3}$.
\begin {proposition} \textnormal{\cite{3}} There is a rank-r matroid $M$ on $E$ whose collection $\mathcal C$ of circuits is 
$\mathcal C_{1}\cup \mathcal C_{2}\cup \mathcal C_{3}\cup \mathcal C_{4}$.
\label{pro3}
\end{proposition}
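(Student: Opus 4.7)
My approach is to verify directly that $\mathcal C := \mathcal C_{1}\cup \mathcal C_{2}\cup \mathcal C_{3}\cup \mathcal C_{4}$ satisfies the three circuit axioms (nonemptiness, antichain property, and weak elimination), thereby establishing that $\mathcal C$ is the circuit set of a matroid on $E$; I then exhibit an explicit basis to confirm the rank is $r$. The antichain property is settled from the sizes ($3,4,r,r+1$ respectively) together with the defining features: $\mathcal C_{4}$ is by construction incomparable with the smaller families; a $\mathcal C_{3}$-member selects exactly one of $\{x_i,y_i\}$ for each $i$ and omits $t$, ruling out nested $\mathcal C_{1}$- or $\mathcal C_{2}$-circuits; the $r-2$ intersection bound makes distinct $\mathcal C_{3}$-members incomparable; and $t\in C\setminus C'$ for $C\in\mathcal C_{1}$ and $C'\in\mathcal C_{2}$ handles the remaining case.

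\textbf{Step 2 (weak elimination).} I would proceed by case analysis on which of $\mathcal C_{1},\ldots,\mathcal C_{4}$ contain $C_1$ and $C_2$, where $e\in C_1\cap C_2$ and $C_1\ne C_2$. Small cases are handled by direct inspection: for instance, two $\mathcal C_{1}$-circuits $\{t,x_i,y_i\}$ and $\{t,x_j,y_j\}$ meeting at $t$ have union minus $t$ equal to the $\mathcal C_{2}$-circuit $\{x_i,y_i,x_j,y_j\}$; a $(1,2)$-pair $\{t,x_i,y_i\}$ and $\{x_i,y_i,x_j,y_j\}$ eliminated at $x_i$ leaves $\{t,y_i,x_j,y_j\}$, which contains the $\mathcal C_{1}$-circuit $\{t,x_j,y_j\}$. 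For the larger cases I would rely on the key structural observation that any $S\subseteq E$ with $|S|\ge r+1$ contains a circuit: either $S$ already contains a member of $\mathcal C_{1}\cup \mathcal C_{2}\cup \mathcal C_{3}$, or, if $|S|=r+1$, it lies in $\mathcal C_{4}$ by definition, and any larger $S$ contains an $(r+1)$-subset. A short size count, invoking the $r-2$ bound on $\mathcal C_{3}$-intersections, confirms $|(C_1\cup C_2)\setminus e|\ge r+1$ whenever at least one of $C_1,C_2$ lies in $\mathcal C_{3}\cup \mathcal C_{4}$, so a smaller circuit can always be extracted.

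\textbf{Step 3 (rank) and main obstacle.} The set $B=\{t,x_1,\ldots,x_{r-1}\}$ has size $r$ and contains no circuit: no $\mathcal C_{1}$- or $\mathcal C_{2}$-circuit (no $y_i\in B$), no $\mathcal C_{3}$-circuit (both $x_r$ and $y_r$ are absent), and no $\mathcal C_{4}$-circuit (wrong size). Adjoining $y_i$ for $i<r$ creates the $\mathcal C_{1}$-circuit $\{t,x_i,y_i\}$; adjoining $x_r$ or $y_r$ produces a set of size $r+1$, which contains a circuit by the structural observation above. Hence $B$ is a basis and the rank of $M$ is exactly $r$. The main obstacle in the argument is the elimination cases where both $C_1$ and $C_2$ lie in $\mathcal C_{3}$: since $|C_1\cap C_2|$ can be as large as $r-2$, one must exploit the bound carefully to guarantee $|(C_1\cup C_2)\setminus e|\ge r+1$, which is precisely the threshold required to apply the structural observation.
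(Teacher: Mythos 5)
Your proposal is correct; note that the paper offers no proof of this proposition at all (it is quoted from Oxley's book), so your direct verification of the circuit axioms is the natural self-contained argument, and every case goes through: the hypothesis that distinct members of $\mathcal C_{3}$ share at most $r-2$ elements is exactly what guarantees $|(C_1\cup C_2)\setminus e|\geq r+1$ in the $\mathcal C_{3}$--$\mathcal C_{3}$ elimination case, and your observation that every $(r+1)$-subset of $E$ is dependent settles both the remaining elimination cases and the rank bound. One minor misattribution: distinct members of $\mathcal C_{3}$ are incomparable simply because they all have cardinality $r$; the $r-2$ intersection bound plays no role in the antichain property, only in weak elimination.
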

The matroid $M$ on $E$ with collection $\mathcal C$ of circuits in the last proposition is called a \emph{rank-$r$ spike with tip $t$} and legs $L_{1},L_{2},...L_{r}$ where $L_{i}=\{t,x_{i},y_{i}\}$ for all $i$.
In the construction of a spike, if $\mathcal C_{3}$ is empty, the corresponding spike is called
the \emph{rank-r free spike with tip $t$}. In an arbitrary spike M, each circuit in $\mathcal C_{3}$ is also a
hyperplane of $M$. Evidently, when such a circuit-hyperplane is relaxed, we obtain another spike. Repeating this procedure until all of the circuit-hyperplanes in $\mathcal C_{3}$ have been relaxed will produce the free spike. Now let $J_{r}$ and $\textbf{1}$ be the $r\times r$ and $r\times 1$ matrices of all ones. For $r\geq 3$, let $A_{r}$ be the $r\times(2r+1)$ matrix $[I_{r}|J_{r}-I_{r}|\textbf{1}]$ over $GF(2)$ whose columns are labeled, in order, $x_{1},x_{2},...,x_{r},y_{1},y_{2}...,y_{r},t$. The vector matroid $M[A_{r}]$ of this matrix is called the \emph{rank-r binary spike with tip t} and denoted by $Z_{r}$. Oxley \cite{3} showed that all rank-$r$, 3-connected binary matroids without a 4-wheel minor can be obtained from a binary $r$-spike by deleting at most two elements.
\section{Circuits of $Z_r$}
In this section, we characterize the collection of circuits of $Z_{r}$. To do this, we use the next well-known theorem.
\begin{theorem} \textnormal{\cite{3}}
A matroid $M$ is binary if and only if for every two distinct circuits $C_{1}$ and $C_{2}$ of $M$, their symmetric difference, $C_{1}\Delta C_{2}$, contains a circuit of $M$. 
\label{thm4}
\end{theorem}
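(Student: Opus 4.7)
The plan is to prove this classical characterization (due to Whitney) by handling the two directions separately.

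For the forward implication, I would use the linear-algebraic structure provided by a binary representation. Fix a representing matrix $A$ of $M$ over $GF(2)$, so that a set $C\subseteq E(M)$ is a circuit precisely when it indexes a minimal linearly dependent collection of columns of $A$. If $C_{1}$ and $C_{2}$ are distinct circuits, the columns indexed by $C_{1}$ sum to zero and likewise for $C_{2}$, hence the columns indexed by $C_{1}\Delta C_{2}$ also sum to zero. Since $C_{1}\neq C_{2}$, the symmetric difference is a nonempty linearly dependent set and therefore contains a circuit of $M$.

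For the reverse implication, my plan is to invoke Tutte's excluded-minor characterization: $M$ is binary if and only if $M$ has no $U_{2,4}$ minor. This reduces the proof to two observations. First, the symmetric-difference property fails in $U_{2,4}$, whose circuits are the four $3$-element subsets of the ground set; the symmetric difference of any two of these is a $2$-element set, and $U_{2,4}$ has no circuits of size less than three. Second, the property is closed under minors. For deletion this is immediate, since the circuits of $M\setminus e$ are exactly the circuits of $M$ that avoid $e$. For contraction, given distinct circuits $C_{1}',C_{2}'$ of $M/e$, I would lift each to a circuit $C_{i}$ of $M$ with $C_{i}\setminus\{e\}=C_{i}'$, apply the hypothesis in $M$ to obtain a circuit $D\subseteq C_{1}\Delta C_{2}$, and then argue that $D\setminus\{e\}$ lies inside $C_{1}'\Delta C_{2}'$ and is dependent in $M/e$, hence contains a circuit of $M/e$. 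Combining the two observations, if $M$ failed to be binary it would have a $U_{2,4}$ minor, and the property would then fail in that minor and hence in $M$, contradicting the hypothesis.

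The main obstacle is the contraction step of the minor-closure argument, which demands a short case analysis on whether $e$ lies in neither, exactly one, or both of $C_{1}$ and $C_{2}$, and correspondingly on whether $e$ appears in the circuit $D$ supplied by the hypothesis. In each case one must track how $e$ distributes through the symmetric difference so as to verify that the extracted dependent subset of $M/e$ really lies in $C_{1}'\Delta C_{2}'$; once this bookkeeping is complete, the rest of the argument is routine.
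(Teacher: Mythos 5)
The paper does not actually prove this statement; it is quoted from Oxley's book as a known characterization, so there is no in-paper argument to compare against. Your proposal is a correct and complete proof. The forward direction is the standard linear-algebra argument: over $GF(2)$ the unique dependence on a minimal dependent set of columns has all coefficients equal to $1$, so the columns of each circuit sum to zero, hence so do those indexed by $C_{1}\Delta C_{2}$, which is nonempty because distinct circuits are incomparable. The reverse direction via minor-closure of the symmetric-difference property, its failure in $U_{2,4}$, and Tutte's excluded-minor theorem is also sound, and the contraction bookkeeping you flag does go through: choosing circuits $C_{i}$ of $M$ with $C_{i}\setminus\{e\}=C_{i}'$, the circuit $D\subseteq C_{1}\Delta C_{2}$ supplied by the hypothesis satisfies $D\setminus\{e\}\subseteq (C_{1}\Delta C_{2})\setminus\{e\}=C_{1}'\Delta C_{2}'$, and $D\setminus\{e\}$ is dependent in $M/e$ whether or not $e\in D$. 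The one point you should make explicit is why $D\setminus\{e\}$ is nonempty: if $e$ is not a loop then $\{e\}$ is not a circuit so $D\neq\{e\}$, and if $e$ is a loop then neither $C_{1}$ nor $C_{2}$ contains $e$, so neither does $D$. Be aware, though, that your reverse direction leans on Tutte's excluded-minor characterization, a considerably deeper theorem than the statement itself; the classical self-contained proof instead shows directly that under the symmetric-difference hypothesis every circuit is the symmetric difference of fundamental circuits with respect to a fixed basis, so that the fundamental-circuit incidence matrix represents $M$ over $GF(2)$. Both routes are valid, but the latter is the one usually given in the cited source.
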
 
Now let $M=(E,\mathcal C)$ be a binary matroid on the set $E$ together with the set $\mathcal C$ of circuits where $E=\{x_{1},x_{2},...,x_{r},y_{1},y_{2}...,y_{r},t\}$ for some $r\geq 3$. Suppose $Y=\{y_{1},y_{2}...,y_{r}\}$. For $k$ in $\{1,2,3,4\}$, we define $\varphi_{k}$ as follows.\\
$\mathcal \varphi_{1}=\{L_{i}=\{t,x_i,y_{i}\}:1\leq i\leq r\}$;\\
$\mathcal \varphi_{2}=\{\{x_{i},y_{i},x_{j},y_{j}\}: 1\leq i < j \leq r \}$;\\
$\mathcal \varphi_{3}=\{Z\subseteq E: \text{$|Z|=r$, $|Z\cap Y|$ is odd and $|Z \cap\{y_{i},x_{i}\}|=1$ where $1\leq i\leq r$}\}$; and\\
\small{\[ \varphi_{4}=\begin{cases}
\{E-C: C\in \mathcal \varphi_{3}\},&\textnormal {if $r$ is odd};\\
\{(E-C)\Delta\{x_{r-1},y_{r-1}\}: {C\in \mathcal \varphi_{3}}\},& \textnormal {if $r$ is even}.
\end{cases}\]}

\begin{theorem} A matroid whose collection $\mathcal C$ of circuits is $\varphi_{1}\cup\varphi_{2}\cup\varphi_{3}\cup\varphi_{4}$, is the rank-r binary spike. 
\label{thm5}
\end{theorem}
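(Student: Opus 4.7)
My plan is to verify that the circuits of $Z_r$ are exactly $\varphi_1\cup\varphi_2\cup\varphi_3\cup\varphi_4$; since a matroid is determined by its circuits, any matroid with this collection as its circuits must then equal $Z_r$. I would split the verification into showing that each listed family consists of circuits, and that no further circuits exist.

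Working with the matrix $A_r=[I_r\mid J_r-I_r\mid\mathbf{1}]$ and identifying $x_i=e_i$, $y_i=\mathbf{1}+e_i$ and $t=\mathbf{1}$ over $GF(2)$, immediate column-sum computations show that $\varphi_1$ and $\varphi_2$ consist of circuits. For a transversal $Z=\{z_1,\ldots,z_r\}$ with $z_i\in\{x_i,y_i\}$ and $k=|Z\cap Y|$, the column sum equals $(1+k)\mathbf{1}$, which vanishes precisely when $k$ is odd; and a parity argument shows that an arbitrary sub-selection picking $I'$ of the $x$-coordinates and $J'$ of the $y$-coordinates has column sum $\sum_{i\in I'\cup J'}e_i+|J'|\mathbf{1}$, so the only vanishing subcombinations are the empty one and the full transversal with $|J'|$ odd. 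This identifies each member of $\varphi_3$ as a circuit. The same parity framework handles $\varphi_4$ in both branches of its definition: the column sum over an $(r+1)$-element candidate vanishes, and the partial-transversal argument gives minimality.

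To rule out further circuits, I would use the fact that $Z_r$ is a rank-$r$ spike, so by Proposition~\ref{pro3} every circuit falls into $\mathcal{C}_1\cup\mathcal{C}_2\cup\mathcal{C}_3\cup\mathcal{C}_4$, with $\mathcal{C}_1=\varphi_1$ and $\mathcal{C}_2=\varphi_2$. An $r$-element circuit avoids every leg and every 4-element pair-union, hence is a transversal not containing $t$; the $GF(2)$ dependence criterion above then forces $\mathcal{C}_3=\varphi_3$. Consequently, $\mathcal{C}_4$ is the set of $(r+1)$-subsets of $E$ avoiding every member of $\varphi_1\cup\varphi_2\cup\varphi_3$, and a short combinatorial argument (containing $t$ is forced since $|S|=r+1$ exceeds the size of any partial transversal) confines such subsets to the form $\{t\}\cup T$ with $T$ a full transversal of even $y$-count. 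Matching this description with $\varphi_4$ is immediate for odd $r$ via the bijection $C\mapsto E-C$ (since $r-k$ is even whenever $r$ and $k$ are both odd); for even $r$ one must apply the parity-correcting symmetric difference with $\{x_{r-1},y_{r-1}\}$, which flips the $y$-count at coordinate $r-1$. I expect the main obstacle to be this even-$r$ branch, where verifying that the asymmetric-looking flip at position $r-1$ realizes the claimed bijection between $\varphi_3$ and the $(r+1)$-subsets $\{t\}\cup T$ with $|T\cap Y|$ even requires carefully tracking the $y$-parities through complementation and symmetric difference.
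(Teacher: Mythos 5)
Your proposal is correct, but it proves the theorem by a genuinely different route from the paper. The paper works axiomatically: it checks that $\varphi_{3}$ satisfies the pairwise-intersection condition and that members of $\varphi_{4}$ are $(r+1)$-sets containing no member of $\varphi_{1}\cup\varphi_{2}\cup\varphi_{3}$, invokes Proposition~\ref{pro3} to conclude the matroid is a rank-$r$ spike, and then appeals to Theorem~\ref{thm4} (declaring the symmetric-difference check ``straightforward'') plus the uniqueness of the binary $r$-spike to identify it as $Z_{r}$. You instead compute directly with the representation $A_{r}=[I_{r}\mid J_{r}-I_{r}\mid\mathbf{1}]$, showing that the minimal dependent column sets are exactly $\varphi_{1}\cup\varphi_{2}\cup\varphi_{3}\cup\varphi_{4}$ and concluding via the fact that a matroid is determined by its circuits. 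Your route buys concreteness: it avoids both the uniqueness appeal and the unexpanded symmetric-difference verification, and it explicitly establishes the equality $\mathcal{C}_{4}=\varphi_{4}$ (both inclusions), whereas the paper only checks that members of $\varphi_{4}$ avoid $\varphi_{1}\cup\varphi_{2}\cup\varphi_{3}$, not that every qualifying $(r+1)$-set is listed; your parity bookkeeping through the even-$r$ branch ($C\mapsto(E-C)\Delta\{x_{r-1},y_{r-1}\}$ bijects $\varphi_{3}$ onto the sets $\{t\}\cup T$ with $T$ a transversal of even $y$-count) is exactly the right computation and does go through. The one place where you lean on something external is the step ``$Z_{r}$ is a rank-$r$ spike, so by Proposition~\ref{pro3} every circuit falls into $\mathcal{C}_{1}\cup\cdots\cup\mathcal{C}_{4}$''; this is the same fact the paper takes for granted, but if you want full self-containment you should replace it with the observation that $A_{r}$ has rank $r$, so every circuit has at most $r+1$ elements, and then classify the small minimal dependent column sets directly by the same $GF(2)$ parity analysis.
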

\begin{proof}
Let $M$ be a matroid on the set $E=\{x_{1},x_{2},...,x_{r},y_{1},y_{2}...,y_{r},t\}$ such that $\mathcal C(M)=\varphi_{1}\cup\varphi_{2}\cup\varphi_{3}\cup\varphi_{4}$. Suppose $Y=\{y_{1},y_{2},...y_{r}\}$. Then, for every two distinct circuits $C_{1}$ and $C_{2}$ of $\varphi_{3}$, we have $C_{1}\cap Y\neq C_{2}\cap Y$ and $|C_{j}\cap \{x_{i},y_{i}\}|=1$ for all $i$ and $j$ with $1\leq i\leq r$ and $j\in \{1,2\}$. We conclude that there is at least one $y_{i}$ in $C_{1}$ such that $y_{i}\notin C_{2}$ and so  $x_{i}$  is in $C_{2}$ but it is not in $C_{1}$. Thus, no two members of $\varphi_{3}$ have more than $r-2$ common elements. It is clear that every member of $\varphi_{4}$ has $(r+1)$-elements and contains no member of $\varphi_{1}\cup\varphi_{2}\cup\varphi_{3}$. By Proposition \ref{pro3}, we conclude that M is a rank-r spike. It is straightforward to show that for every two distinct members of $\mathcal C$, their symmetric difference contains a circuit of $M$. Thus, by Theorem \ref{thm4}, $M$ is a binary spike.
\end{proof}
It is not difficult to check that if $r$ is odd, then the intersection of every two members of $\varphi_{3}$ has odd cardinality and the intersection of every two members of $\varphi_{4}$ has even cardinality and if $r$ is even, then the intersection of every two members of $\varphi_{3}$ has even cardinality and the intersection of every two members of $\varphi_{4}$ has odd cardinality.  Clearly, $|\varphi_{1}|=r$, $|\varphi_{2}|=\frac{r(r-1)}{2}$ and $|\varphi_{3}|=|\varphi_{4}|=2^{r-1}$. Therefore, every rank-r binary spike has $2^r+\frac{r(r+1)}{2}$ circuits. Moreover, $\cap_{i=1}^{r} L_{i}\neq \emptyset$ and $|C\cap \{x_{i},y_{i}\}|=1$ where $1\leq i\leq r$ and $C$ is a member of $\varphi_{3}\cup \varphi_{4}$. 
\section{The $es$-splitting operation on $Z_{r}$}
By applying the $es$-splitting operation on a given matroid with $k$ elements, we obtain a matroid with $k+2$ elements. In this section, our main goal is to give a necessary and sufficient condition for $X\subseteq E(Z_{r})$ with $e\in X$, to obtain $Z_{r+1}$ by applying the $es$-splitting operation on $X$. Now suppose that $M=Z_{r}$ be a binary rank-r spike with the matrix representation $[I_{r}|J_{r}-I_{r}|\textbf{1}]$ over $GF(2)$ whose columns are labeled, in order $x_{1},x_{2},...,x_{r},y_{1},y_{2},...,y_{r},t$. Suppose $\varphi=\varphi_{1}\cup \varphi_{2}\cup \varphi_{3}\cup \varphi_{4}$ be the collection of circuits of $Z_{r}$ defined in section 2. Let $X_{1}=\{x_{1},x_{2},...,x_{r}\}$ and $Y_{1}=\{y_{1},y_{2},...,y_{r}\}$ and let $X$ be a subset of $E(Z_{r})$. By the following lemmas, we give six conditions for membership of $X$ such that, for every element $e$ of this set, $(Z_{r})^e_{X}$ is not the spike $Z_{r+1}$.
\begin{lemma} If $r\geq 4$ and $t\notin X$, then, for every element  $e$ of $X$, the matroid $(Z_{r})^e_{X}$ is not the spike $Z_{r+1}$.
\label{lem6}
\end{lemma}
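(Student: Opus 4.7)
The strategy will be a proof by contradiction: I will assume $(Z_r)^e_X = Z_{r+1}$ for some $e \in X$ and derive a contradiction by counting 3-element circuits on each side. Since $t \notin X$ but $e \in X$, the element $e$ lies in $X_1 \cup Y_1$ and hence belongs to a unique leg $L_i = \{t, x_i, y_i\}$ of $Z_r$. I will carry out the argument for $e = x_i$; the case $e = y_i$ is entirely symmetric.

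First I would enumerate the 3-element circuits of $(Z_r)^e_X$ via Proposition \ref{pro1}. The distinguished circuit $\Lambda = \{e, \alpha, \gamma\}$ is always present. Because $Z_r$ has no 1- or 2-element circuits and because each $\mathcal{C}_5$-circuit has size at least $6$, the remaining 3-circuits can come only from: (i) class $\mathcal{C}_0$, which retains each leg $L_k$ of $Z_r$ for which $|\{x_k, y_k\} \cap X|$ is even (using $t \notin X$); and (ii) class $\mathcal{C}_3$, which---exactly when $L_i$ is an $OX$-circuit, i.e., when $y_i \notin X$---contributes the triple $\{t, y_i, \gamma\}$.

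The heart of the argument is the following observation. Since $r+1 \geq 5$, the 3-circuits of $Z_{r+1}$ are precisely its $r+1$ legs, and any two legs meet in exactly the tip. But $\Lambda = \{x_i, \alpha, \gamma\}$ is disjoint from $L_j = \{t, x_j, y_j\}$ for every $j \neq i$. Consequently, if any such $L_j$ survived as a 3-circuit of $(Z_r)^e_X$, we would have two disjoint 3-circuits, contradicting the spike structure of $Z_{r+1}$. Therefore, for each $j \neq i$ the parity $|\{x_j, y_j\} \cap X|$ must be odd, and the 3-circuits of $(Z_r)^e_X$ collapse to $\Lambda$ together with at most one extra circuit---namely $L_i$ if $y_i \in X$, or $\{t, y_i, \gamma\}$ if $y_i \notin X$.

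This yields at most two 3-circuits in $(Z_r)^e_X$, whereas $Z_{r+1}$ has $r+1 \geq 5$ of them, giving the required contradiction. The main bookkeeping obstacle is the case-by-case verification that classes $\mathcal{C}_1$, $\mathcal{C}_2$, $\mathcal{C}_4$, and $\mathcal{C}_5$ contribute no 3-circuits; once this is done, the hypothesis $r \geq 4$ enters only to guarantee $r+1 \geq 5 > 2$, which is exactly where the count forces the contradiction.
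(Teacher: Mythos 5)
Your proof is correct and follows essentially the same route as the paper's: you enumerate the 3-element circuits of $(Z_r)^e_X$ via Proposition \ref{pro1}, use the fact that $\Lambda=\{x_i,\alpha,\gamma\}$ is disjoint from every leg $L_j$ with $j\neq i$ to rule out any surviving leg other than $L_i$, and conclude that only two 3-circuits remain while $Z_{r+1}$ needs $r+1\geq 5$. Your version is slightly cleaner in that it handles the two cases ($y_i\in X$ or not) uniformly through the circuit count, but the underlying ideas are identical.
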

\begin{proof}
Suppose that $t\notin X$. Without loss of generality, we may assume that there exist $i$ in $\{1,2,...,r\}$ such that $x_{i}\in X$ and $e=x_{i}$. By Proposition \ref{pro1}, the set $\Lambda=\{x_{i},\alpha,\gamma\}$ is a circuit of $(Z_{r})^e_{X}$. Now consider the leg $L_{i}=\{t,x_{i},y_{i}\}$, we have two following cases.\\
\textbf{(i)} If $y_{i}\in X$, then $|L_{i}\cap X|$ is even. By Proposition \ref{pro1}, the leg $L_{i}$ is a circuit of $(Z_{r})^e_{X}$. Now if all other legs of $Z_{r}$ have an odd number of elements of $X$, by Proposition \ref{pro1}, we observe that these legs transform to circuits of cardinality 4 and 5. So there are exactly two 3-circuit in $(Z_{r})^e_{X}$. If not, there is a  $j\neq i$ such that $L_{j}$ is a 3-circuit of $(Z_{r})^e_{X}$ and $(\Lambda\cap L_{i}\cap L_{j})=\emptyset$, we conclude that in each case, for every element  $e$ of $X$, the matroid $(Z_{r})^e_{X}$ is not the spike $Z_{r+1}$. Since $Z_{r+1}$ has $r+1$ legs and the intersection of the legs of $Z_{r+1}$ is non-empty.\\ 
\textbf{(ii)} If $y_{i}\notin X$, then $|L_{i}\cap X|$ is odd. By Proposition \ref{pro1}, $(L_{i}\setminus{x_{i}})\cup \gamma$ is a circuit of $(Z_{r})^e_{X}$. Now if there is the other leg $L_{j}$ such that $|L_{j}\cap X|$ is even, then $L_{j}$ is a circuit of $(Z_{r})^e_{X}$. But $(L_{j}\cap \Lambda\cap ((L_{i}\setminus{x_{i}})\cup \gamma))=\emptyset$, so  $(Z_{r})^e_{X}$ is not the spike $Z_{r+1}$. We conclude that every leg $L_{j}$ with $j\neq i$ has an odd number of elements of $X$. Since $x_{i}\notin L_{j}$, by Proposition \ref{pro1} again, $L_{j}$ is not a 3-circuit in  $(Z_{r})^e_{X}$. Therefore,  $(Z_{r})^e_{X}$ has only two 3-circuits and so, for every element $e$ of $X$, the matroid $(Z_{r})^e_{X}$ is not the spike $Z_{r+1}$.  
\end{proof}
\begin{lemma} If $r\geq 4$ and $e\neq t$, then, for every element  $e$ of $X-t$, the matroid $(Z_{r})^e_{X}$ is not the spike $Z_{r+1}$.
\label{lem7}
\end{lemma}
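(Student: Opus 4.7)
The strategy mirrors Lemma~\ref{lem6}: enumerate all three-element circuits of $(Z_r)^e_X$ and compare with the structure of $Z_{r+1}$, where every $3$-circuit is a leg and all $r+1$ legs meet in the tip. By Lemma~\ref{lem6} we may assume $t\in X$, and by the symmetry between $X_1$ and $Y_1$ we may take $e=x_i$ for some $i$. Proposition~\ref{pro1} immediately yields the $3$-circuit $\Lambda=\{x_i,\alpha,\gamma\}$ of $(Z_r)^e_X$.

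Next, I would enumerate the remaining $3$-circuits. For $r\geq 4$, Section~2 gives that the only $3$-element circuits of $Z_r$ are the legs $L_1,\dots,L_r$, since members of $\varphi_2,\varphi_3,\varphi_4$ all have cardinality at least $4$. Inspecting the six families $\mathcal C_0,\dots,\mathcal C_5$ of Proposition~\ref{pro1}, only $\mathcal C_0$ can preserve a $3$-circuit (a leg $L_j$ with $|L_j\cap X|$ even) and only $\mathcal C_3$ can send a $3$-circuit of $Z_r$ to a new $3$-circuit (producing $(L_i\setminus\{x_i\})\cup\{\gamma\}=\{t,y_i,\gamma\}$ when $|L_i\cap X|$ is odd); every other family increases circuit size enough to exceed $3$, with $\mathcal C_5$ outputting only circuits of size $\geq 6$. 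Because $t\in X$, a leg $L_j$ with $j\neq i$ is EX exactly when $|\{x_j,y_j\}\cap X|=1$. Hence the $3$-circuits of $(Z_r)^e_X$ are $\Lambda$, one of $L_i$ or $\{t,y_i,\gamma\}$ (according as $y_i\notin X$ or $y_i\in X$), and those $L_j$ with $j\neq i$ and $|\{x_j,y_j\}\cap X|=1$.

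The decisive observation is that $\Lambda\cap L_j=\emptyset$ for every $j\neq i$, since $L_j\subseteq\{t,x_j,y_j\}$ and $\Lambda=\{x_i,\alpha,\gamma\}$ share no element. Therefore, if some $L_j$ with $j\neq i$ is a $3$-circuit of $(Z_r)^e_X$, then $(Z_r)^e_X$ already contains two disjoint $3$-circuits and cannot be any spike (whose $3$-circuits all pass through the tip). If on the other hand no such $L_j$ survives, then $(Z_r)^e_X$ has at most two $3$-circuits in total, whereas $Z_{r+1}$ has $r+1\geq 5$. Either way $(Z_r)^e_X\neq Z_{r+1}$, completing both sub-cases ($y_i\in X$ and $y_i\notin X$) in one stroke.

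The main obstacle is the bookkeeping in the second paragraph: one must verify carefully that none of the six $\mathcal C_k$-families, beyond $\mathcal C_0$ and $\mathcal C_3$, can contribute a stray $3$-element circuit (the $\mathcal C_5$ case, built from disjoint $OX$-circuits, deserves particular attention). Once that enumeration is pinned down, the disjointness $\Lambda\cap L_j=\emptyset$ for $j\neq i$ closes the argument at once.
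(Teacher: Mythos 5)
Your proof is correct and follows essentially the same route as the paper's: both identify the $3$-circuits of $(Z_r)^e_X$ via Proposition~\ref{pro1} and derive a contradiction from the fact that $\Lambda=\{x_i,\alpha,\gamma\}$ is disjoint from every leg $L_j$ with $j\neq i$, so either two disjoint $3$-circuits appear or there are only two $3$-circuits instead of the $r+1\geq 5$ legs of $Z_{r+1}$. Your version merely unifies the paper's two sub-cases ($y_i\in X$ versus $y_i\notin X$) into a single counting argument.
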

\begin{proof} 
Suppose that $e\neq t$. Without loss of generality, we may assume that there exist $i$ in $\{1,2,...,r\}$ such that $x_{i}\in X$ and $e=x_{i}$. By Proposition \ref{pro1}, the set $\Lambda=\{x_{i},\alpha,\gamma\}$ is a circuit of $(Z_{r})^e_{X}$ and by Lemma \ref{lem6}, to obtain $Z_{r+1}$, the element $t$ is contained in $X$. Now consider the leg $L_{i}=\{t,x_{i},y_{i}\}$. We have two following cases.\\
\textbf{(i)} If $y_{i}\in X$, then $|L_{i}\cap X|$ is odd. By Proposition \ref{pro1}, $L_{i}\cup \alpha$ and $(L_{i}\setminus{x_{i}})\cup \gamma$ are circuits of $(Z_{r})^e_{X}$. Now if there is the other leg $L_{j}$ such that $|L_{j}\cap X|$ is even, then $L_{j}$ is a circuit of $(Z_{r})^e_{X}$. But $(L_{j}\cap \Lambda\cap ((L_{i}\setminus{x_{i}})\cup \gamma))=\emptyset$, so  $(Z_{r})^e_{X}$ is not the spike $Z_{r+1}$. We conclude that every leg $L_{j}$ with $j\neq i$ has an odd number of elements of $X$. Since $x_{i}\notin L_{j}$, by Proposition \ref{pro1} again, $L_{j}$ is not a 3-circuit in  $(Z_{r})^e_{X}$. Therefore  $(Z_{r})^e_{X}$ has only two 3-circuit and so  $(Z_{r})^e_{X}$ is not the spike $Z_{r+1}$.\\
\textbf{(ii)} If $y_{i}\notin X$, then $|L_{i}\cap X|$ is even. So $L_{i}$ is a circuit of $(Z_{r})^e_{X}$. By similar arguments in Lemma \ref{lem6} (i), one can show that for every element  $e$ of $X-t$, the matroid $(Z_{r})^e_{X}$ is not the spike $Z_{r+1}$.    
\end{proof}
Next by Lemmas \ref{lem6} and \ref{lem7}, to obtain the spike $Z_{r+1}$, we take $t$ in $X$ and $e=t$. 
\begin{lemma} If $r\geq 4$ and there is a circuit $C$  of $\mathcal \varphi_{3}$ such that $|C\cap X|$ is even, then the matroid $(Z_{r})^t_{X}$ is not the spike $Z_{r+1}$.
\label{lem8}
\end{lemma}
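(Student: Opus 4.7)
The plan is to exhibit $C$ itself as a circuit of $(Z_r)^t_X$ whose size and internal structure are incompatible with any $Z_{r+1}$.

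First, I would note that by the definition of $\varphi_3$ we have $|C|=r$, $|C\cap\{x_i,y_i\}|=1$ for every $i$, and in particular $t\notin C$. Since $|C\cap X|$ is even, $C$ is an $EX$-circuit of $Z_r$, so the $\mathcal{C}_0$ clause of Proposition~\ref{pro1} gives that $C$ is still a circuit of $(Z_r)^t_X$, of size $r$.

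If $(Z_r)^t_X$ were the spike $Z_{r+1}$ then, by the circuit-size analysis at the end of Section~2, every circuit of $(Z_r)^t_X$ would have size in $\{3,4,r+1,r+2\}$. For $r\geq 5$ this already contradicts $|C|=r$, and we are done in that range. The only remaining case is $r=4$, where $|C|=4$ is a legitimate circuit size and $C$ would have to be a $\varphi_2$-circuit of the form $\{x_i',y_i',x_j',y_j'\}$ of $Z_5$, i.e.\ a union of two leg-pairs of the purported spike.

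To handle $r=4$, I would identify the potential leg structure of $(Z_4)^t_X$ through its $3$-circuits. Enumerating via Proposition~\ref{pro1}, the $3$-circuits are exactly $\Lambda=\{t,\alpha,\gamma\}$ together with, for each $i$, either $L_i=\{t,x_i,y_i\}$ (if $|L_i\cap X|$ is even, from $\mathcal{C}_0$) or $\{x_i,y_i,\gamma\}$ (if odd, from $\mathcal{C}_3$); the remaining clauses of Proposition~\ref{pro1} produce no further $3$-circuits because $Z_4$ has no $1$- or $2$-circuits and any member of $\mathcal{C}_5$ has size at least $6$. For these five $3$-circuits to share a common element (required for the tip of a spike), either all $L_i$ must be preserved --- forcing tip $t$ and leg-pairs $\{x_1,y_1\},\dots,\{x_4,y_4\},\{\alpha,\gamma\}$ --- or all must be flipped, giving tip $\gamma$ and pairs $\{x_1,y_1\},\dots,\{x_4,y_4\},\{t,\alpha\}$; any mixed pattern immediately produces two disjoint $3$-circuits. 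In either uniform case $C$ avoids $\{t,\alpha,\gamma\}$ and contains exactly one element of each $\{x_i,y_i\}$, so $C$ contains no full leg-pair and cannot be a union of two of them, contradicting the hypothesis that it is a circuit of $Z_5$.

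The main obstacle is the $r=4$ case: the bare size argument fails, and one has to locate the tip of the purported $Z_5$ among $\{t,\gamma\}$ before the pair-decomposition obstruction can be made precise.
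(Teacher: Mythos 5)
Your proof is correct, and while the $r\geq 5$ half coincides with the paper's (both observe that $C$ survives as an $r$-circuit via the $\mathcal{C}_0$ clause of Proposition~\ref{pro1}, while $Z_{r+1}$ only has circuits of sizes $3$, $4$, $r+1$, $r+2$), your treatment of $r=4$ is genuinely different. The paper handles $r=4$ by first arguing that the hypotheses force $X$ into one of two specific sets and then counting: $(Z_4)^t_X$ turns out to have fourteen $4$-circuits whereas $Z_5$ has exactly ten. You instead leave $X$ arbitrary, enumerate the $3$-circuits of $(Z_4)^t_X$ from Proposition~\ref{pro1} (exactly $\Lambda$ plus, for each $i$, either $L_i$ or $\{x_i,y_i,\gamma\}$), use the fact that in a rank-$5$ spike the $3$-circuits are precisely the legs and must pairwise meet in the tip to force a uniform pattern with tip $t$ or $\gamma$, and then observe that $C$ --- a surviving $4$-circuit meeting each $\{x_i,y_i\}$ in one element and avoiding $t,\alpha,\gamma$ --- cannot be a union of two leg-pairs. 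Your version buys a cleaner, more structural contradiction that does not depend on pinning down $X$ or on an explicit circuit count (the paper's reduction of $X$ to ``$E(Z_r)-t$ or $t$'' is stated rather tersely), at the cost of a slightly longer case analysis of the $3$-circuits; the paper's count is shorter once one accepts its narrowing of $X$. Both arguments are sound.
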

\begin{proof} Suppose that $C$ is a circuit of $Z_{r}$ such that is a member of $\varphi_{3}$ and $|C\cap X|$ is even. Then, by Proposition \ref{pro1}, the circuit $C$ is preserved under the $es$-splitting operation. So $C$ is a circuit of $(Z_{r})^t_{X}$. But $|C|=r$. Now if $r> 4$, then $C$ cannot be a circuit of $Z_{r+1}$, since it has no $r$-circuit, and if $r=4$, then, to preserve the members of $\varphi_{2}$ in $Z_{4}$ under the $es$-splitting operation and to have at least one member of $\varphi_{3}$ which has even number of elements of $X$, the set $X$ must be $E(Z_{r})-t$ or $t$. But in each case $(Z_{4})^t_X$ has exactly fourteen 4-circuits, so it is not the spike $Z_{5}$, since this spike has exactly ten 4-circuit. We conclude that the matroid $(Z_{r})^t_{X}$ is not the spike $Z_{r+1}$.  
\end{proof} 
\begin{lemma} If $r\geq 4$ and $|X\cap \{x_{i},y_{i}\}|=2$, for $i$ in $\{1,2,...,r\}$, then the matroid $(Z_{r})^t_{X}$ is not the spike $Z_{r+1}$ unless $r$ is odd and for all $i$, $\{x_{i},y_{i}\}\subset X$, in which case $Z_{r+1}$ has $\gamma$ as a tip.
\label{lem9}
\end{lemma}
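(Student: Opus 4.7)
The plan is to begin by combining Lemmas~\ref{lem6} and~\ref{lem7} (which force $t \in X$ and $e = t$) with the hypothesis to deduce that $X = E(Z_r)$: indeed, $\bigcup_{i=1}^r \{x_i, y_i\} \subseteq X$ together with $t \in X$ exhausts $E$. The analysis then splits on the parity of $r$.

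If $r$ is even, every $C \in \varphi_3$ satisfies $|C \cap X| = |C| = r$, which is even. Thus each such $C$ is an $EX$-circuit, and Lemma~\ref{lem8} applies directly, yielding $(Z_r)^t_X \neq Z_{r+1}$.

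If $r$ is odd, I would verify $(Z_r)^t_X = Z_{r+1}$ with $\gamma$ as tip by a direct matrix computation. With $X = E$, the row adjoined to the augmented representation equals $\mathbf{1}$ on the columns of $E$ and $0$ elsewhere, so $\alpha = (\mathbf{0}_r, 1)^T$ and $\gamma = t + \alpha = (\mathbf{1}_r, 0)^T$. The key step is the row operation $R_{r+1} \leftarrow R_{r+1} + R_1 + R_2 + \cdots + R_r$. Using $r$ odd, the sum $R_1 + \cdots + R_r$ contributes $1$ on each $x_k$ (turning the last-row entry to $0$), contributes $r - 1 \equiv 0$ on each $y_k$ (keeping the entry $1$), contributes $r \equiv 1$ on $t$ (turning it to $0$) and on $\gamma$ (turning it to $1$), and contributes $0$ on $\alpha$. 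After this operation, $\{x_1, \ldots, x_r, \alpha\}$ form $I_{r+1}$, $\{y_1, \ldots, y_r, t\}$ form $J_{r+1} - I_{r+1}$, and $\gamma = \mathbf{1}_{r+1}$. Relabeling $\alpha$ as the new $x_{r+1}$ and $t$ as the new $y_{r+1}$, the transformed matrix is precisely $[I_{r+1} \mid J_{r+1} - I_{r+1} \mid \mathbf{1}]$, which represents $Z_{r+1}$ with tip $\gamma$.

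The main obstacle is the parity bookkeeping in the row operation: for $r$ even the same computation would leave $y_k$ with a $0$ in the last row (since $r - 1$ is then odd), destroying the $J_{r+1} - I_{r+1}$ pattern. This is consistent with the conclusion obtained via Lemma~\ref{lem8} in that case, and confirms the claimed dichotomy.
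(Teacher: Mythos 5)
Your argument for the exceptional case ($r$ odd, $X=E(Z_r)$) is correct and is a genuinely different route from the paper's: the paper enumerates the circuits of $(Z_r)^t_X$ via Proposition~\ref{pro1} and matches them against the circuit description of $Z_{r+1}$, whereas your row operation $R_{r+1}\leftarrow R_{r+1}+R_1+\cdots+R_r$ exhibits the representing matrix as $[I_{r+1}\mid J_{r+1}-I_{r+1}\mid \mathbf{1}]$ after relabelling $\alpha,t$ as $x_{r+1},y_{r+1}$ and $\gamma$ as the tip. I checked the parity bookkeeping and it is right; this is cleaner and more self-contained than the paper's circuit-by-circuit verification (which has to argue separately that $\mathcal{C}_5$ is empty). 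The reduction of the $r$ even case to Lemma~\ref{lem8} is also fine and is exactly what the paper does.

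The gap is in your very first step. You read the hypothesis as ``$|X\cap\{x_i,y_i\}|=2$ for \emph{every} $i$'' and conclude $X=E(Z_r)$ immediately. But the lemma is intended (and is used later, e.g.\ in Lemma~\ref{lem10} and in the two main theorems, to conclude $|X\cap\{x_i,y_i\}|=1$ for all $i$) with an existential hypothesis: \emph{some} pair $\{x_i,y_i\}$ lies entirely in $X$. Note that the ``unless'' clause explicitly reinstates ``for all $i$, $\{x_i,y_i\}\subset X$'' as an extra condition, which would be vacuous under your reading. Consequently your proof omits the mixed case in which $\{x_i,y_i\}\subseteq X$ for some $i$ but $\{x_j,y_j\}\not\subseteq X$ for some $j$, and this case is where most of the content of the lemma lies. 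The paper handles it as follows: since $t\in X$ and $e=t$, the leg $L_i$ has odd intersection with $X$ and $t\in L_i$, so by Proposition~\ref{pro1} the set $\{x_i,y_i,\gamma\}$ is a triangle of $(Z_r)^t_X$; if $|L_j\cap X|$ is even for some $j\neq i$ then $L_j=\{t,x_j,y_j\}$ survives as a triangle disjoint from $\{x_i,y_i,\gamma\}$, which is impossible in a spike since all legs share the tip. This forces every pair into $X$, i.e.\ $X=E(Z_r)$, and only then do your two parity cases take over. You need to supply this (or an equivalent) argument before the reduction to $X=E(Z_r)$ is legitimate.
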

\begin{proof}
Suppose that $\{x_{i},y_{i}\}\subset X$ for $i\in\{1,2,...,r\}$. Since $t\in X$ and $e=t$, after applying the $es$-splitting operation, the leg $\{t,x_{i},y_{i}\}$ turns into two circuits $\{t,x_{i},y_{i},\alpha\}$ and $\{x_{i},y_{i},\gamma\}$. Now consider the leg $L_{j}=\{t,x_{j},y_{j}\}$ where $j\neq i$. If  $|L_{j}\cap X|$ is even (this means $\{x_{j},y_{j}\}\nsubseteq X$), then $L_{j}$ is a circuit of $(Z_{r})^e_{X}$. But $\{x_{i},y_{i},\gamma\}\cap \{t,x_{j},y_{j}\}=\emptyset$ and this contradicts the fact that the intersection of the legs of a spike is not the empty set. So $\{x_{j},y_{j}\}$ must be a subset of $X$. We conclude that $\{x_{k},y_{k}\}\subset X$ for all $k\neq i$. Thus $X=E(Z_{r})$. But in this case, $r$ cannot be even since every circuit in $\varphi_{3}$ has even cardinality and by Lemma \ref{lem8}, the matroid $(Z_{r})^t_{X}$ is not the spike $Z_{r+1}$.

Now we show that if $X=E(Z_{r})$, and $r$ is odd, then $(Z_{r})^t_{X}$ is the spike $Z_{r+1}$ with tip $\gamma$.
Clearly, every leg of $Z_{r}$ has an odd number of elements of $X$. Using Proposition 1, after applying the $es$-splitting operation, we have the following changes.\\
For $i$ in $\{1,2,...,r\}$, $L_{i}$ transforms to two circuits $(L_{i}\setminus t)\cup\gamma$ and $L_{i}\cup \alpha$, every member of $\varphi_{2}$ is preserved, and if $C\in \varphi_{3}$, then $C\cup\alpha$ and $C\cup\{t,\gamma\}$ are circuits of $(Z_{r})^t_{X}$. Finally, if $C\in \varphi_{4}$, then $C$ and $(C\setminus t)\cup\{\alpha,\gamma\}$ are circuits of $(Z_{r})^t_{X}$. Note that, since $X=E(M)$ with $e=t$, there are no two disjoint $OX$-circuits in $Z_{r}$ such that their union be minimal. Therefore the collection $\mathcal C_{5}$ in Proposition 1 is empty. Now suppose that $\alpha$ and $t$ play the roles of $x_{r+1}$ and $y_{r+1}$, respectively, and $\gamma$ plays the role of tip. Then   we have the spike $Z_{r+1}$ with tip $\gamma$ whose  collection $\psi$ of circuits is $\psi_{1}\cup\psi_{2}\cup\psi_{3}\cup\psi_{4}$ where
\textit{\begin{itemize}[font=\normalfont\textbf]
\item[]  $\mathcal \psi_{1}=\{(L_{i}\setminus t)\cup \gamma:1\leq i\leq r\}\cup\Lambda$;
\item[]  $\mathcal \psi_{2}=\{\{x_{i},y_{i},x_{j},y_{j}\}: 1\leq i < j \leq r \}\cup\{(L_{i}\cup \alpha: 1\leq i\leq r\}$;
\item[]  $\mathcal \psi_{3}=\{C\cup \alpha: C\in \varphi_{3}\}\cup\{C: C\in \varphi_{4}\}$; 
\item[]  $\mathcal \psi_{4}=\{C\cup \{t,\gamma\}: C\in \varphi_{3}\}\cup\{(C\setminus t)\cup\{\alpha,\gamma\}: C\in \varphi_{4}\}$. 
\end{itemize}}
\end{proof} 
In the following lemma, we shall use the well-known facts that if a matroid $M$ is  $n$-connected with $E(M)\geq 2(n-1)$, then all circuits and all cocircuits of $M$ have at least $n$ elements, and if $A$ is a matrix that represents $M$ over $GF(2)$, then the cocircuit space of $M$ equals the row space of $A$. 
\begin{lemma} If $|X|\leq r$, then the matroid $(Z_{r})^t_{X}$ is not the spike $Z_{r+1}$.
\label{lem10}
\end{lemma}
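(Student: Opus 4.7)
The plan is to contradict $(Z_r)^t_X=Z_{r+1}$ by showing that the new row of $A^t_X$ produces a cocircuit of the wrong size, and then to finish by counting $3$-circuits.

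First, Lemmas \ref{lem6} and \ref{lem7} force $t\in X$ and $e=t$. Since $|X|\leq r<2r+1=|E(Z_r)|$, Lemma \ref{lem9} applies and gives $|X\cap\{x_i,y_i\}|\leq 1$ for every $i\in\{1,\dots,r\}$. Let $R$ denote the last row of $A^t_X$; by construction its support is exactly $X\cup\{\alpha\}$, of size $|X|+1\leq r+1$. Since the cocircuit space equals the row space of $A^t_X$, this set is a cocycle of $(Z_r)^t_X$.

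Second, I would show that $X\cup\{\alpha\}$ is in fact a cocircuit. Any proper nonzero cocycle $D\subseteq X\cup\{\alpha\}$ has the form $\sum_{i\in T}R_i$ when $\alpha\notin D$, or $R+\sum_{i\in S}R_i$ when $\alpha\in D$, where $R_1,\dots,R_r$ are the rows of $[I_r\mid J_r-I_r\mid \mathbf 1]$. Reading off the columns of $A^t_X$: when $|T|$ or $|S|$ is odd, the sum has a $1$ in the $\gamma$ column, so the support escapes $X\cup\{\alpha\}$; and when $|T|$ or $|S|$ is even and nonzero, forcing the support to lie inside $X\cup\{\alpha\}$ requires $\{x_i,y_i\}\subseteq X$ for every index involved, which is excluded by Lemma \ref{lem9}. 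Hence $X\cup\{\alpha\}$ is minimal, i.e.\ a cocircuit of $(Z_r)^t_X$ of size $|X|+1$.

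Third, a direct calculation with the row space of $A_{r+1}=[I_{r+1}\mid J_{r+1}-I_{r+1}\mid \mathbf 1]$ shows that every cocircuit of $Z_{r+1}$ has size exactly $4$ or exactly $r+2$. Since we assumed $(Z_r)^t_X=Z_{r+1}$ and $|X|+1\leq r+1<r+2$, the cocircuit $X\cup\{\alpha\}$ must have size $4$, which forces $|X|=3$ and $X=\{t,z_{i_1},z_{i_2}\}$ with $i_1\neq i_2$ and $z_{i_k}\in\{x_{i_k},y_{i_k}\}$.

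Finally, using Proposition \ref{pro1} I would list the $3$-circuits of $(Z_r)^t_X$ for this $X$: the set $\Lambda=\{t,\alpha,\gamma\}$, the two preserved legs $L_{i_1},L_{i_2}$ (with $|L_{i_k}\cap X|$ even), and the triangles $\{x_j,y_j,\gamma\}$ for each $j\notin\{i_1,i_2\}$ (with $|L_j\cap X|$ odd). Since $r\geq 4$ at least one such $j$ exists, and then the $3$-circuits $L_{i_1}$ and $\{x_j,y_j,\gamma\}$ are disjoint---impossible in any spike, whose $3$-circuits are all legs sharing the tip, a contradiction. The main obstacle is the minimality verification in the second step, which requires a careful parity analysis of the row-sums using the explicit column structure of $A^t_X$ together with the constraint from Lemma \ref{lem9}.
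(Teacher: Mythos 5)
Your proof is correct, but it reaches the conclusion by a genuinely different route than the paper. The paper's argument stays almost entirely at the level of $3$-circuits: after the same reductions ($e=t\in X$ and $|X\cap\{x_i,y_i\}|\leq 1$ for all $i$), it notes that $|X|\leq r$ forces some index $j$ with $x_j,y_j\notin X$, so $\{x_j,y_j,\gamma\}$ is a triangle of $(Z_r)^t_X$; then either some other leg $L_k$ survives as a triangle with $L_k\cap\Lambda\cap\{x_j,y_j,\gamma\}=\emptyset$ (impossible in a spike, whose triangles all share the tip), or every leg is an $OX$-circuit, which collapses $X$ to $\{t\}$ and produces the $2$-cocircuit $\{t,\alpha\}$, contradicting $3$-connectivity. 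So the paper invokes the row-space/cocircuit idea only in the degenerate case $|X|=1$, whereas you make it the engine of the whole proof: you show the new row's support $X\cup\{\alpha\}$ is a genuine cocircuit (your parity analysis does go through --- the $\gamma$-column records the parity of the number of original rows used, and $|X\cap\{x_i,y_i\}|\leq 1$ blocks the even case), you classify the cocircuits of $Z_{r+1}$ as having size $4$ or $r+2$ (also correct: the row sums over an even index set $T$ give supports $\{x_i,y_i:i\in T\}$, minimal only for $|T|=2$, and over an odd $T$ give supports of size $r+2$), and this eliminates every case except $|X|=3$ in one stroke, which you finish with the same disjoint-triangle contradiction the paper uses. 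Your version costs more machinery --- the minimality check and the cocircuit classification, both of which I verified --- but treats all sizes $1\leq|X|\leq r$ uniformly; the paper's version is shorter and more elementary but leans on a case split over the parities of $|L_k\cap X|$.
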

\begin{proof} Suppose $X\subset E(Z_{r})$ such that $|X|\leq r$. Then, by Lemmas \ref{lem6}, \ref{lem7} and \ref{lem8}, $t\in X$ with $e=t$ and $|X\cap \{x_{i},y_{i}\}|=1$ for all $i$ in $\{1,2,...,r\}$. Therefore, there are at least two elements $x_{j}$ and $y_{j}$ with $i\neq j$ not contained in $X$ and so the leg $L_{j}=\{t,x_{j},y_{j}\}$ has an odd number of elements of $X$. Thus, after applying the $es$-splitting operation $L_{j}$ transforms to $\{x_{j},y_{j},\gamma\}$. Now let $L_{k}=\{t,x_{k},y_{k}\}$ be another leg of $Z_{r}$. If $|L_{k}\cap X|$ is even, then $L_{k}$ is a circuit of $(Z_{r})^t_{X}$. But $(L_{k}\cap \Lambda \cap \{x_{j},y_{j},\gamma\})=\emptyset$. Hence, in this case, the matroid $(Z_{r})^t_{X}$ is not the spike $Z_{r+1}$ . We may now assume that every other leg of $Z_{r}$ has an odd number of elements of $X$. Then, for all $j\neq i$, the elements $x_{j}$ and $y_{j}$ are not contained in $X$. We conclude that $|X|=1$ and in the last row of the matrix that represents the matroid $(Z_{r})^t_{X}$  there are two entries 1 in the  corresponding columns of $t$ and $\alpha$. Hence, $(Z_{r})^t_{X}$ has a 2-cocircuit and it is not the matroid $Z_{r+1}$ since spikes are 3-connected matroids.      
\end{proof}
By Lemmas \ref{lem9} and \ref{lem10}, we must check that if $|x|=r+1$, then, by using the $es$-splitting operation, can we build the spike $Z_{r+1}$? 
\begin{lemma} If $r\geq 4$  and $|X\cap X_{1}|$ be odd, then the matroid $(Z_{r})^t_{X}$ is not the spike $Z_{r+1}$.
\label{lem11}
\end{lemma}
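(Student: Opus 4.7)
The plan is to exhibit a single member of $\varphi_{3}$ whose intersection with $X$ has a parity that is incompatible with $|X\cap X_{1}|$ being odd, and then finish via Lemma \ref{lem8}.

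First I would assemble what the previous lemmas force on $X$. By Lemmas \ref{lem6} and \ref{lem7}, any hope of obtaining $Z_{r+1}$ requires $t\in X$ and $e=t$; by Lemma \ref{lem10}, $|X|\geq r+1$; and by Lemma \ref{lem9} (setting aside its exceptional case $X=E(Z_{r})$ with $r$ odd, where the new tip is $\gamma$ and $|X|=2r+1\neq r+1$), no pair $\{x_{i},y_{i}\}$ is fully contained in $X$. Hence $|X\cap\{x_{i},y_{i}\}|\leq 1$ for every $i$, so
\[
r+1\leq |X|=1+|X\cap(X_{1}\cup Y_{1})|\leq 1+r,
\]
which forces $|X|=r+1$ and, in particular, $|X\cap\{x_{i},y_{i}\}|=1$ for every $i\in\{1,\dots,r\}$.

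Next I would pick a convenient witness circuit. For any fixed $j\in\{1,\dots,r\}$, set
\[
C_{j}=\{y_{j}\}\cup\{x_{i}:1\leq i\leq r,\ i\neq j\}.
\]
Then $|C_{j}|=r$, $C_{j}$ meets each pair $\{x_{i},y_{i}\}$ in exactly one element, and $|C_{j}\cap Y_{1}|=1$ is odd, so $C_{j}\in\varphi_{3}$. Writing $a=|X\cap X_{1}|$, a direct count gives
\[
|C_{j}\cap X|=|\{y_{j}\}\cap X|+\bigl(a-|\{x_{j}\}\cap X|\bigr),
\]
and since exactly one of $x_{j},y_{j}$ lies in $X$ we have $|\{x_{j}\}\cap X|+|\{y_{j}\}\cap X|=1$. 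Reducing modulo $2$ therefore yields $|C_{j}\cap X|\equiv a+1\pmod{2}$.

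If $a$ is odd, then $|C_{j}\cap X|$ is even, so $\varphi_{3}$ contains a circuit whose intersection with $X$ has even cardinality. Lemma \ref{lem8} then directly concludes that $(Z_{r})^{t}_{X}$ is not the spike $Z_{r+1}$. The only mildly delicate step is the bookkeeping to extract from Lemmas \ref{lem6}--\ref{lem10} the precise structural form of $X$ (namely $\{t\}$ together with exactly one element chosen from each pair $\{x_{i},y_{i}\}$); once this is in hand, the witness $C_{j}$ is natural and the parity computation is a one-liner.
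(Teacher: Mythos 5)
Your proof is correct, and it takes a cleaner, more uniform route than the paper's. Both arguments funnel everything into Lemma \ref{lem8}, but the paper splits on the parity of $r$: for $r$ even it identifies $X-t$ as a member of $\varphi_{3}$ and uses $|(X-t)\cap X|=r$ even; for $r$ odd it identifies $X$ as a member of $\varphi_{4}$ and then has to manufacture a second circuit $C'=X\Delta\{x_{i},y_{i},x_{j},y_{j}\}$ so that $E-C'\in\varphi_{3}$ meets $X$ in exactly two elements. You avoid both the case split and the symmetric-difference construction by writing down one explicit witness $C_{j}=\{y_{j}\}\cup\{x_{i}:i\neq j\}\in\varphi_{3}$ and running a two-line parity count; the verification that $|C_{j}\cap X|\equiv|X\cap X_{1}|+1\pmod 2$ is correct and needs only that $|X\cap\{x_{j},y_{j}\}|=1$, which you correctly extract from Lemmas \ref{lem6}--\ref{lem10}. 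You are also more careful than the paper on one point: as literally stated the lemma fails for $r$ odd and $X=E(Z_{r})$ (there $|X\cap X_{1}|=r$ is odd yet Lemma \ref{lem9} produces $Z_{r+1}$), and the statement only makes sense under the standing assumption $|X|=r+1$ announced in the sentence preceding it; you flag and discharge this exceptional case explicitly, whereas the paper leaves it implicit. What the paper's longer route buys is the explicit identification of $X$ as $C\cup t$ with $C\in\varphi_{3}$ or as a member of $\varphi_{4}$, which is the form reused in the two main theorems, but that identification is not needed to prove this lemma.
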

\begin{proof} Suppose that $r$ is even and $|X\cap X_{1}|$ is odd. Since $t\in X$ and $|X|=r+1$, so $|X\cap Y_{1}|$ must be odd. Therefore the set  $X$ must be $C\cup t$ where $C\in \varphi_{3}$. But $|C\cap X|$ is even and by Lemma \ref{lem8}, the matroid $(Z_{r})^t_{X}$ is not the spike $Z_{r+1}$. Now Suppose that $r$ is odd, $r\geq 4$ and $|X\cap X_{1}|$ is odd. Then $|X\cap Y_{1}|$ must be even and so $X=C$ where $C\in \varphi_{4}$. By definition of binary spikes, there is a circuit $C'$ in $\varphi_{4}$ such that $C'=C\Delta\{x_{i},y_{i},x_{j},y_{j}\}$ for all $i$ and $j$ with $1\leq i < j\leq r$. Clearly, $|(E-C')\cap X|=2$. Since $(E-C')$ is a circuit of $Z_{r}$ and is a member of $\varphi_{3}$, by Lemma \ref{lem8}, the matroid $(Z_{r})^t_{X}$ is not the spike $Z_{r+1}$.

\end{proof}
Now suppose that $M$ is a binary rank-r spike with tip $t$ and $r\geq 4$. Let $X\subseteq E(M)$ and $e\in X$ and let $E(M)-E(M^e_{X})=\{\alpha,\gamma\}$ such that $\{e,\alpha,\gamma\}$ is a circuit of $M^e_{X}$. Suppose $\varphi=\cup_{i=0}^{4}\varphi_{i}$ be the collection of circuits of $M$ where $\varphi_{i}$ is defined in section 2.  With these preliminaries, the next two theorems are the main results of this paper.  
\begin{theorem} 
Suppose that $r$ is an even integer  greater than three. Let $M$ be a rank-r binary spike with tip $t$. Then $M^e_{X}$ is a rank-$(r+1)$ binary spike if and only if $X=C$ where $C\in \varphi_{4}$ and $e=t$.
\end{theorem}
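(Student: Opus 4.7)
The plan is to treat necessity and sufficiency separately.

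For necessity I chain the constraints imposed by the preceding lemmas. Lemma \ref{lem6} forces $t\in X$, Lemma \ref{lem7} then forces $e=t$, and Lemma \ref{lem10} forces $|X|\geq r+1$. Since $r$ is even, Lemma \ref{lem9} forbids any pair $\{x_i,y_i\}$ from being contained in $X$, so $|X\cap\{x_i,y_i\}|\leq 1$ for every $i$; combined with $t\in X$ this pins down $|X|=r+1$ and $X=\{t,z_1,\dots,z_r\}$ with $z_i\in\{x_i,y_i\}$ for each $i$. Lemma \ref{lem11} then forces $|X\cap X_1|$ to be even, so $X-t$ contains an even number of $y_i$'s. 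Unfolding the definition of $\varphi_4$ for even $r$, an $(r+1)$-set containing $t$, exactly one element from each pair $\{x_i,y_i\}$, and an even number of $y_i$'s is precisely a member of $\varphi_4$; hence $X\in\varphi_4$.

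For sufficiency I would take $X\in\varphi_4$, $e=t$, and compute the circuit family of $M^t_X$ directly via Proposition \ref{pro1}. A short parity calculation first shows that under these hypotheses every member of $\varphi_1\cup\varphi_2$ meets $X$ in an even number of elements, while every member of $\varphi_3\cup\varphi_4$ meets $X$ in an odd number; so the $OX$-circuits of $Z_r$ are precisely $\varphi_3\cup\varphi_4$. Feeding this into Proposition \ref{pro1} produces four families of circuits in $M^t_X$: the $r$ legs $L_1,\dots,L_r$ survive from $\mathcal C_0$, and $\Lambda=\{t,\alpha,\gamma\}$ is a further $3$-circuit, giving $r+1$ three-element circuits in total; the $\binom{r}{2}$ pairs $\{x_i,y_i,x_j,y_j\}$ survive from $\mathcal C_0$ and are joined by the $r$ circuits $\{x_i,y_i,\alpha,\gamma\}$ from $\mathcal C_4$, giving $\binom{r+1}{2}$ four-element circuits; each $C\in\varphi_3$ contributes $C\cup\alpha\in\mathcal C_1$ and each $C\in\varphi_4$ contributes $(C\setminus t)\cup\gamma\in\mathcal C_3$, giving $2^r$ circuits of size $r+1$; and each $C\in\varphi_3$ contributes $C\cup\{t,\gamma\}\in\mathcal C_2$ while each $C\in\varphi_4$ contributes $C\cup\alpha\in\mathcal C_1$, giving $2^r$ circuits of size $r+2$. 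Setting $\alpha=x_{r+1}$ and $\gamma=y_{r+1}$, and using that $r+1$ is odd (so that at rank $r+1$ the analogue of $\varphi_4$ is simply $\{E(M^t_X)-C:C\in\varphi_3'\}$, with no extra symmetric difference), these four families match term-by-term with the four circuit families of a rank-$(r+1)$ binary spike with tip $t$, and Theorem \ref{thm5} then concludes $M^t_X=Z_{r+1}$.

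The main obstacle I expect is showing that the collection $\mathcal C_5$ in Proposition \ref{pro1} contributes no further circuit. The cleanest route is to enumerate the possible disjoint pairs of $OX$-circuits of $Z_r$: two members of $\varphi_4$ both contain $t$ and so cannot be disjoint; a parity check using $r$ even and $|X\cap Y_1|$ even rules out any disjoint pair consisting of one circuit of $\varphi_3$ and one of $\varphi_4$; and two disjoint members of $\varphi_3$ are forced to partition $E-t$, so their union equals $E-t$, which strictly contains the $4$-circuit $\{x_1,y_1,x_2,y_2\}$ already in the list and hence is not minimal as a dependent set. Consequently $\mathcal C_5=\emptyset$, and the numerical check $r+1+\binom{r+1}{2}+2^r+2^r=2^{r+1}+\frac{(r+1)(r+2)}{2}$ recovers the known circuit count of $Z_{r+1}$, confirming the identification.
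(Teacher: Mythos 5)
Your proof is correct and follows essentially the same route as the paper: necessity by chaining Lemmas \ref{lem6}--\ref{lem11} to pin down $X$ as a member of $\varphi_{4}$ with $e=t$, and sufficiency by running the parity analysis through Proposition \ref{pro1} and matching the resulting circuit families with the characterization of $Z_{r+1}$ from Theorem \ref{thm5}. The one place you genuinely diverge is $\mathcal C_{5}$: the paper simply asserts that $Z_{r}$ has no two disjoint $OX$-circuits, which is actually false here (for $C\in\varphi_{3}$ the complementary transversal $E-t-C$ again has an odd number of $y_{i}$'s when $r$ is even, so it lies in $\varphi_{3}$ and is disjoint from $C$); your version correctly notices these pairs and disposes of them because their union $E-t$ contains the $4$-circuit $\{x_{1},y_{1},x_{2},y_{2}\}$ and so contributes no new circuit. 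So your handling of that step is more careful than the paper's, and the rest coincides.
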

\begin{proof}
Suppose that $M=Z_{r}$ and $X\subseteq E(M)$ and $r$ is even. Then, by combining the last six lemmas, $|X|=r+1$; and $X$ contains an even number of elements of $X_{1}$ with $t\in X$. The only subsets of $E(Z_{r})$ with these properties are members of $\varphi_{4}$. Therefore $X=C$ where $C\in \varphi_{4}$ and by Lemma \ref{lem7}, $e=t$. Conversely, let $X=C$ where $C\in \varphi_{4}$. Then, by using Proposition \ref{pro1}, every leg of $Z_{r}$ is preserved under the $es$-splitting operation since they have an even number of elements of $X$. Moreover, for $i\in\{1,2,...,r\}$, every leg $L_{i}$ contains $e$ where $e=t$. So $L_{i}\setminus t$ contains an odd number of elements of $X$ and by Proposition \ref{pro1}, the set $(L_{i}\setminus t)\cup \{\alpha,\gamma\}$ is a circuit of $M^t_{X}$. Clearly, every member of $\varphi_{2}$ is preserved. Now let $C'\in \varphi_{3}$. Then $t\notin C'$. We have two following cases.\\
\textbf{(i)} Let $C'=(E-X)\Delta \{x_{r-1},y_{r-1}\}$. Then $|C'\cap X|=1$ and by Proposition \ref{pro1}, $C'\cup \alpha$ and $C'\cup \{t,\gamma\}$ are circuits of $M^t_{X}$.\\
\textbf{(ii)} Let $C'=(E-C'')\Delta \{x_{r-1},y_{r-1}\}$ where $C''\neq X$ and $C''\in \varphi_{4}$. Since $|X|=r+1$ and $|C''\cap X|$ is odd, the cardinality of the set $X\cap (E-C'')$ is even and so $|C'\cap X|$ is odd. Therefore, by Proposition \ref{pro1} again, $C'\cup \alpha$ and $C'\cup \{t,\gamma\}$ are circuits of $M^t_{X}$.

Evidently, if $C\in \varphi_{4}$, then $|C\cap X|$ is odd and by Proposition \ref{pro1}, $C\cup \alpha$ and $(C\setminus t)\cup \gamma$ are circuits of $M^t_{X}$. Moreover, there are no two disjoint $OX$-circuit in $\varphi$. So the collection $\mathcal C_{5}$ in Proposition 1 is empty.
To complete the proof, suppose that $\alpha$ and $\gamma$ play the roles of $x_{r+1}$ and $y_{r+1}$, respectively, then we have the spike $Z_{r+1}$ with collection of circuits $\psi=\psi_{1}\cup\psi_{2}\cup\psi_{3}\cup\psi_{4}$ where
\textit{\begin{itemize}[font=\normalfont\textbf]
\item[]  $\mathcal \psi_{1}=\{L_{i}=\{t,x_i,y_{i}\}:1\leq i\leq r\}\cup\Lambda$;
\item[]  $\mathcal \psi_{2}=\{\{x_{i},y_{i},x_{j},y_{j}\}: 1\leq i < j \leq r \}\cup\{(L_{i}\setminus t)\cup\{\alpha,\gamma\}:1\leq i\leq r \}$;
\item[]  $\mathcal \psi_{3}=\{C\cup \alpha: C\in \varphi_{3}\}\cup\{(C\setminus t)\cup \gamma: C\in \varphi_{4}\}$; 
\item[]  $\mathcal \psi_{4}=\{C\cup \{t,\gamma\}: C\in \varphi_{3}\}\cup\{C\cup \alpha: C\in \varphi_{4}\}$. 
\end{itemize}}
\end{proof} 
\begin{theorem} 
Suppose that $r$ is an odd integer greater than three. Let $M$ be a rank-r binary spike with tip $t$. Then $M^e_{X}$ is a rank-$(r+1)$ binary spike if and only if $X=C\cup t$ where $C\in \varphi_{3}$ or $X=E(M)$, and $e=t$.
\label{thm13}
\end{theorem}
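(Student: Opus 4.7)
The plan is to run essentially the same proof as in the even-$r$ theorem just established, with two differences specific to the odd case: the exceptional clause of Lemma 9 is now active and supplies the extra alternative $X = E(M)$, and the parity conventions in $\varphi_3$ and $\varphi_4$ flip when one passes from $Z_r$ (odd rank) to $Z_{r+1}$ (even rank), so the final circuit-matching step requires a careful reshuffle.

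For the forward direction, suppose $M^e_X$ is a rank-$(r+1)$ binary spike. Lemmas \ref{lem6} and \ref{lem7} force $t \in X$ and $e = t$; Lemma \ref{lem10} gives $|X| \geq r+1$; Lemma \ref{lem8} forbids any member of $\varphi_3$ from meeting $X$ in an even number of elements. If some pair $\{x_i, y_i\}$ lies entirely inside $X$, Lemma \ref{lem9} (using that $r$ is odd) forces $X = E(M)$, the second alternative. Otherwise $|X \cap \{x_i, y_i\}| \leq 1$ for every $i$; combined with $t \in X$ and $|X| \geq r+1$ this forces $|X - t| = r$ with exactly one element chosen from each pair. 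Lemma \ref{lem11} then makes $|X \cap X_1|$ even, so $|X \cap Y_1|$ is odd, and the set $X - t$ fits the definition of $\varphi_3$ verbatim, giving the first alternative.

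For the converse, the case $X = E(M)$ is already treated inside Lemma \ref{lem9}, so assume $X = C \cup t$ with $C \in \varphi_3$ and $e = t$. The plan is to feed each original circuit family into Proposition \ref{pro1} to enumerate the circuits of $M^t_X$, then invoke Theorem \ref{thm5} to identify the result with $Z_{r+1}$. Every leg $L_i$ has $|L_i \cap X| = 2$ (an $EX$-circuit containing $e$), contributing both $L_i$ and $(L_i \setminus t) \cup \{\alpha, \gamma\}$. Every member of $\varphi_2$ meets $X$ in two elements and is preserved. For any $C' \in \varphi_3$ a short parity calculation, writing $|C' \cap C| = r - |C \cap Y| - |C' \cap Y| + 2|C \cap C' \cap Y|$, shows $|C' \cap C|$ is odd because $r$ is odd; so each such $C'$ is an $OX$-circuit avoiding $e$ and contributes $C' \cup \alpha$ and $C' \cup \{t, \gamma\}$. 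A complementary calculation, using $\varphi_4 = \{E - C''' : C''' \in \varphi_3\}$ for $r$ odd, shows every $C'' \in \varphi_4$ satisfies $|C'' \cap X|$ odd with $e \in C''$, contributing $C'' \cup \alpha$ and $(C'' \setminus t) \cup \gamma$. Adjoining $\Lambda = \{t, \alpha, \gamma\}$ completes the list; the class $\mathcal C_5$ is empty because the only disjoint pairs of $OX$-circuits of $Z_r$ are of the form $(C', E - C')$ with $C' \in \varphi_3$, whose union has size $2r+1$, exceeding the rank bound $r+2$ for any circuit of $M^t_X$.

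Identifying $\alpha$ and $\gamma$ as the new coordinates $x_{r+1}$ and $y_{r+1}$, the listed circuits must be compared against the $\psi_1 \cup \psi_2 \cup \psi_3 \cup \psi_4$ description of $Z_{r+1}$ supplied by Theorem \ref{thm5}, now under the even-rank conventions. The main obstacle will be precisely this comparison: because $r+1$ is even, the definition of $\psi_4$ involves a symmetric difference with a pair $\{x_k, y_k\}$, so the images of $\varphi_3$ and $\varphi_4$ have to be sorted between $\psi_3$ and $\psi_4$ by the new parity rule rather than by cardinality alone. The expected sorting is that $\{C' \cup \alpha : C' \in \varphi_3\} \cup \{(C'' \setminus t) \cup \gamma : C'' \in \varphi_4\}$ realises $\psi_3$ and $\{C' \cup \{t, \gamma\} : C' \in \varphi_3\} \cup \{C'' \cup \alpha : C'' \in \varphi_4\}$ realises $\psi_4$; a single parity verification mirroring the one used in the even-$r$ converse then closes the proof.
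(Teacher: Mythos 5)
Your proposal is correct and follows essentially the same route as the paper: the forward direction combines Lemmas \ref{lem6}--\ref{lem11} to pin $X$ down to $E(M)$ or $C\cup t$ with $C\in\varphi_{3}$, and the converse enumerates the circuits of $M^{t}_{X}$ via Proposition \ref{pro1} (with the same parity computations, the same argument that $\mathcal C_{5}$ is empty, and the same final identification of $\alpha,\gamma$ with $x_{r+1},y_{r+1}$). If anything, your version is slightly more explicit than the paper's in the parity bookkeeping for $|C'\cap C|$ and in justifying why $\mathcal C_{5}$ is empty.
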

\begin{proof} Suppose that $M=Z_{r}$ and $X\subseteq E(M)$. Let $X=E(M)$. Then, by Lemma \ref{lem9}, the matroid $M^t_{X}$ is the spike $Z_{r+1}$ with tip $\gamma$. Now, by combining the last six lemmas., $|X|=r+1$ and $X$ contains an even number of elements of $X_{1}$ with $t\in X$. The only subsets of $E(Z_{r})$ with these properties are in  $\{C\cup t: C\in \varphi_{3} \}$. Conversely, let $X=C\cup t$ where $C\in \varphi_{3}$. Clearly, every member of $\varphi_{3}$ contains an odd number of elements of $X$. Now let $C'$ be a member of $\varphi_{4}$. If $C'=E(Z_{r})-C$, then $C'$ contains an odd number of elements of $X$. If $C'\neq E(Z_{r})-C$, then there is a $C''\in \varphi_{3}$ such that $C'=E(Z_{r})-C''$. Therefore $|C\cap C'|=|C\cap(E(Z_{r})-C'')|=|C-(C\cap C'')|$ and so $|C\cap C'|$  is even. So $C'$ contains an odd number of elements of $X$ and, by Proposition \ref{pro1} again $C'\cup \alpha$ and $(C\setminus t)\cup \gamma$ are circuits of $M^t_{X}$. Evidently, if $C_{1}$ and $C_{2}$ be disjoint $OX$-circuits of $Z_{r}$, then one of  $C_{1}$ and $C_{2}$ is in $\varphi_{3}$ and the other is in $\varphi_{4}$ where $C_{2}=E(Z_{r})-C_{1}$, as $C_{1}\cup C_{2}$ is not minimal, it follows by Proposition \ref{pro1} that $\mathcal C_{5}$ is empty. Now if $\alpha$ and $\gamma$ play the roles of $x_{r+1}$ and $y_{r+1}$, respectively. Then $M^t_{X}$ is the spike $Z_{r+1}$ with collection of circuits $\psi=\psi_{1}\cup\psi_{2}\cup\psi_{3}\cup\psi_{4}$ where
\textit{\begin{itemize}[font=\normalfont\textbf]
\item[]  $\mathcal \psi_{1}=\{L_{i}=\{t,x_i,y_{i}\}:1\leq i\leq r\}\cup\Lambda$;
\item[]  $\mathcal \psi_{2}=\{\{x_{i},y_{i},x_{j},y_{j}\}: 1\leq i < j \leq r \}\cup\{(L_{i}\setminus t)\cup\{\alpha,\gamma\}: 1\leq i\leq r\}$;
\item[]  $\mathcal \psi_{3}=\{C\cup \alpha: C\in \varphi_{3}\}\cup\{(C\setminus t)\cup \gamma: C\in \varphi_{4}\}$; 
\item[]  $\mathcal \psi_{4}=\{C\cup \{t,\gamma\}: C\in \varphi_{3}\}\cup\{C\cup \alpha: C\in \varphi_{4}\}$. 
\end{itemize}}  
\end{proof}
\begin{remark} Note that the binary rank-3 spike is the Fano matroid denoted by $F_{7}$. It is straightforward to check that any one of the seven elements of $F_{7}$ can be taken as the tip, and $F_{7}$ satisfies the conditions of Theorem 13 for any tip. So, there are exactly 35 subset $X$ of $E(F_{7})$ such that $(F_{7})^e_{X}$ is the binary 4-spike where $e$ is a tip of it. Therefore, by Theorem \ref{thm13}, these subsets are $X=E(F_{7})$ for every element $e$ of $X$ and $C\cup z$ for every element $z$ in $E(F_{7})$ not contained in $C$ with $e=z$ where $C$ is a 3-circuit of $F_{7}$.  
\end{remark}

\end{document}